\theoremstyle{plain}
\newtheorem{theorem}{\textbf{Theorem}}[section]
\newtheorem{lemma}[theorem]{\textbf{Lemma}}
\newtheorem{remark}[theorem]{\textbf{Remark}}
\newcommand{\R}{\mathbb{R}}
\newcommand{\N}{\mathbb{N}}
\newtheorem{definition}[theorem]{Definition}
\newtheorem{example}[theorem]{\textbf{Example}}
\numberwithin{equation}{section}
\begin{document}

\title[Relation-Theoretic... With an Application ]
{Relation-Theoretic Metrical Fixed Point Results via $w$-distance With an Application in nonlinear fractional differential equations }
\author[T. Senapati , L.K. Dey]%
{Tanusri Senapati$^{1}$, Lakshmi Kanta Dey$^{2}$}

%\thanks{}
\address{{$^{1}$\,} Tanusri Senapati,
                    Department of Mathematics,
                    National Institute of Technology
                    Durgapur,
                    West Bengal,
                    India.}
                    \email{senapati.tanusri@gmail.com}
\address{{$^{2}$\,} Lakshmi Kanta Dey,
                    Department of Mathematics,
                    National Institute of Technology Durgapur,
                    West Bengal,
                    India.}
                    \email{lakshmikdey@yahoo.co.in}

%\address{{$^{3}$\,} Ankush Chanda,
%                    Department of Mathematics,
%                    National Institute of Technology
%                    Durgapur,
%                    West Bengal,
%                    India.}
%                    \email{ankushchanda8@gmail.com}

\thanks{*Corresponding author: lakshmikdey@yahoo.co.in (L.K. Dey)}

 \subjclass[2010]{ $47$H$10$, $54$H$25$. }
 \keywords {Complete metric space, binary relation, $w$-distance, fixed point, nonlinear fractional differential equation}

\begin{abstract}
In this article, utilizing the concept of $w$-distance, we prove the celebrated Banach's fixed point theorem in metric spaces equipped with an arbitrary binary relation. Necessarily our findings unveil another direction of relation-theoretic metrical fixed point theory.   Also, our paper consists of  several non-trivial examples which signify the motivation for such investigations. Finally, our obtained results  enable us to explore the existence and uniqueness of solutions of nonlinear fractional  differential equations involving the Caputo fractional derivative.
\end{abstract}
%extend and improve certain comparable results in existing literature particularly, results of Alam and Imdad [J. Fixed Point Theory Appl. 17(4), 2015].
\maketitle

\section{\bf Introduction}
On account of the fact that the metric fixed point theory imparts a sound basis for exploring many problems in pure and applied sciences, many authors went into the possibility of altering the concepts of metric and metric spaces.  One such interesting and important motivation is to establish fixed point results in metric space endowing with an arbitrary binary relation. Exploiting the concepts of different kind binary relations such as partial order, strict order, preorder, tolerance, transitive etc. on metric space, many mathematician are doing their research during several years, see for example \cite{nie,ghod,tur3,tur4,tur5,h}. Very recently, Alam and Imdad \cite{alm} presented relation-theoretic metrical fixed point results due to famous Banach contraction principle using an amorphous relation. No doubt their results extended and improved several comparable results in existing literature but still there are some cases where we can't explain the existence of fixed point employing their results. One of the aims of this article is to present some improved and refined version of existing results using the concept of $w$-distance.  Due to reader's advantage, we need to recall some important definitions and useful results relevant to this literature. 

Throughout this article, the notations $\mathbb{Z,N,R}$, $\mathbb{R}^+$ have their usual meanings. 
 
\begin{definition}\cite{lip}
Let $X$ be a non-empty set and $\mathcal{R}$ be a binary relation defined on $X\times X$. Then, $x$ is $\mathcal{R}$-related to $y$ if and only if $(x,y)\in \mathcal{R}$.
\end{definition}
\begin{definition}\cite{mad}
A binary relation $\mathcal{R}$ defined on $X$ is said to be complete if for all $x,y\in X$, $[x,y]\in \mathcal{R}$, where  $[x,y]\in \mathcal{R}$ stands for either $(x,y)\in \mathcal{R}~\mbox{or}~ (y,x)\in \mathcal{R}$.
\end{definition}
\begin{definition}\cite{alm}
Suppose $\mathcal{R}$ is a binary relation defined on a non-empty set $X$.  Then a sequence $(x_n)$ in $X$ is said to be  $\mathcal{R}$-preserving if $$
(x_n, x_{n+1})\in \mathcal{R}~\forall n\in \mathbb{N}\cup\{0\}.$$  
\end{definition}
\begin{definition}\cite{alm}
A metric space $(X,d)$ endowed with a binary relation $\mathcal{R}$ is said to be $\mathcal{R}$-complete if every $\mathcal{R}$-preserving Cauchy sequence converges in $X$.
\end{definition}
\begin{definition}\cite{alm}
Let $X$ be a non-empty set and $f$ be a self-map defined on $X$. Then a binary relation $\mathcal{R}$ on $X$ is said to be $f$-closed if $(x,y)\in \mathcal{R}\Rightarrow (fx,fy)\in \mathcal{R}$.
\end{definition}
Here we introduce the notion of weak $f$-closed binary relation.
\begin{definition}
Let $X$ be a non-empty set and $f$ be a self-map defined on $X$. Then a binary relation $\mathcal{R}$ on $X$ is said to be weak $f$-closed if $(x,y)\in \mathcal{R}\Rightarrow [fx,fy]\in \mathcal{R}$.
\end{definition} 
It is easy to show that every $f$-closed binary relation $\mathcal{R}$ is weak $f$-closed but the converse is not true in general. To show this we present the following example.
\begin{example}
Let $X=\N$ and $\mathcal{R}$ be a binary relation defined on $X$ such that $(x,y)\in \mathcal{R}$ if $x=2m, y=2n+1$ for some $m,n\in N$. Now, we define a function $f:X\rightarrow X$ by $f(x)=x+1$ for all $x\in X$.
Then it is trivial to show that $(x,y)\in \mathcal{R}\nRightarrow (fx,fy)\in \mathcal{R}~\mbox{but}~(fy,fx)\in \mathcal{R}$. Hence, the binary relation $\mathcal{R}$ is not $f$-closed but it is weak $f$-closed.
\end{example}

\begin{definition}\cite{alm}
Let $(X,d)$ be a metric space endowed with a binary relation $\mathcal{R}$. Then, $\mathcal{R}$ is said to be $d$-self-closed if every $\mathcal{R}$-preserving sequence with  $x_n \to x$ there is a subsequence $(x_{n_k})$ of $(x_n)$ such that $(x_{n_{k}},x)\in \mathcal{R}$, for all $k\in \mathbb{N}\cup\{0\}$.
\end{definition}

%\begin{definition}\cite{sam1}
%Let $(X,d)$ be a metric space endowed with a binary relation $\mathcal{R}$. Then, a subset $S\subseteq X$ is said to be $\mathcal{R}$-directed if every  for every $x,y\in S$ there is a point $z\in S$ such that $(x,z)\in \mathcal{R}$ and  $(y,z)\in \mathcal{R}$. 
%\end{definition}
%\begin{definition}\cite{kol}
%Let $(X,d)$ be a metric space and $\mathcal{R}$ be an arbitrary relation on $X$.  For any pair of elements $x,y\in X$, a finite sequence $\{z_0,z_1,z_2,\dots,z_k\}$ in $X$ is said to be a path of length $k$ joining  $x$ to $y$ in $\mathcal{R}$ if $z_0=x, z_k=y~\mbox{and}~(z_i,z_{i+1})\in \mathcal{R}$ for each $i\in \{1,2,\dots,k-1\}$. 
%\end{definition}
For the sake of reader's perception, we recollect some notations from existing literature:
\begin{enumerate}
\item[(A)]$F(T)=\{x\in X:Tx=x\}$,
\item[(B)]$X(T,\mathcal{R})=\{x\in X:(x,Tx)\in \mathcal{R}\}.$
%\item[(C)] $Y(x,y,\mathcal{R})$: the family of all paths joining $x$ to $y$,
\end{enumerate}
% Here, we recall/bring off some results in the existing literature consistent with this article.
Before proceeding further, we record the following results. 
 \begin{theorem}(Theorem 3.1, Alam and Imdad \cite{alm}) Let $(X,d)$ be a complete metric space equipped with a binary relation $\mathcal{R}$. Suppose $T$ is a self-mapping on $X$ such that 
 \begin{enumerate}
 \item $X(T,\mathcal{R})\neq \phi$,
 \item $\mathcal{R}$ is $T$-closed,
 \item either $T$ is continuous or $\mathcal{R}$ is $d$-self-closed,
 \item there exists $k\in [0,1)$ such that
 $$d(Tx,Ty)\leq k d(x,y)~~~\forall x,y\in X ~\mbox{with}~(x,y)\in \mathcal{R}.$$
 \end{enumerate}
 Then $F(T)\neq \phi.$
 \end{theorem}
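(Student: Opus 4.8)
The plan is to run the classical Picard iteration, but confined to an orbit that remains inside the relation so that hypothesis (4) is genuinely applicable. First I would invoke (1) to pick $x_0 \in X(T,\mathcal{R})$ and set $x_{n+1} = Tx_n$ for all $n$. Since $(x_0, x_1) = (x_0, Tx_0) \in \mathcal{R}$, repeated use of the $T$-closedness in (2) gives $(x_n, x_{n+1}) \in \mathcal{R}$ for every $n$ by induction; that is, the Picard sequence $(x_n)$ is $\mathcal{R}$-preserving. This is the crucial structural point, since the contraction in (4) is assumed only along $\mathcal{R}$-related pairs, and keeping the orbit $\mathcal{R}$-preserving is precisely what licenses its use at each step.

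With the orbit $\mathcal{R}$-preserving, I would apply (4) to the consecutive pair $(x_n, x_{n+1})$ to obtain
\[ d(x_{n+1}, x_{n+2}) = d(Tx_n, Tx_{n+1}) \leq k\, d(x_n, x_{n+1}), \]
and hence, by iteration, $d(x_n, x_{n+1}) \leq k^n d(x_0, x_1)$. A standard telescoping estimate through the triangle inequality then yields, for $m > n$,
\[ d(x_n, x_m) \leq \sum_{i=n}^{m-1} k^i\, d(x_0, x_1) \leq \frac{k^n}{1-k}\, d(x_0, x_1), \]
which tends to $0$ as $n \to \infty$ because $k \in [0,1)$. Thus $(x_n)$ is Cauchy, and completeness of $(X,d)$ produces a limit $x^* \in X$ with $x_n \to x^*$.

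It remains to verify that $x^*$ is fixed, and this is where hypothesis (3) --- and the only real subtlety --- enters. If $T$ is continuous, then $Tx^* = T(\lim_n x_n) = \lim_n Tx_n = \lim_n x_{n+1} = x^*$, and we are done. If instead $\mathcal{R}$ is $d$-self-closed, continuity is unavailable and I cannot pass $T$ through the limit directly; here the $d$-self-closedness supplies a subsequence $(x_{n_j})$ with $(x_{n_j}, x^*) \in \mathcal{R}$, so that (4) applies to each pair $(x_{n_j}, x^*)$. Writing $x_{n_j+1} = Tx_{n_j}$ and using the triangle inequality,
\[ d(x^*, Tx^*) \leq d(x^*, x_{n_j+1}) + d(Tx_{n_j}, Tx^*) \leq d(x^*, x_{n_j+1}) + k\, d(x_{n_j}, x^*), \]
and letting $j \to \infty$ --- so that both $x_{n_j} \to x^*$ and $x_{n_j+1} \to x^*$ --- the right-hand side vanishes, forcing $d(x^*, Tx^*) = 0$ and hence $Tx^* = x^*$. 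Either way $F(T) \neq \emptyset$. The delicate step is exactly this last one in the non-continuous case: because (4) may be invoked only on $\mathcal{R}$-related arguments, the whole argument depends on $d$-self-closedness producing the membership $(x_{n_j}, x^*) \in \mathcal{R}$ along a subsequence, rather than requiring it along the full tail of $(x_n)$.
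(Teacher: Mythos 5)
Your proof is correct, and it follows exactly the approach this paper takes: the statement is recorded here from Alam and Imdad without proof, but the paper's own proof of its generalized Theorem \ref{thm1} uses the identical skeleton --- Picard iteration kept $\mathcal{R}$-preserving via $T$-closedness, a geometric-series Cauchy estimate, and the same case split between continuity and $d$-self-closedness (where the subsequence with $(x_{n_k},x^*)\in\mathcal{R}$ is the key point, just as you note). Your triangle-inequality finish in the $d$-self-closed case is the standard specialization to $p=d$ of the paper's Lemma \ref{L2}-based argument.
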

\begin{theorem}(Theorem 2.1, Ahmadullah et al.\cite{ahm}) Let $(X,d)$ be a metric space equipped with a binary relation $\mathcal{R}$. Suppose $T$ is a self-mapping on $X$ with the following conditions: 
 \begin{enumerate}
 \item there exists $Y\subseteq X, TX\subseteq Y\subseteq X$ such that $(Y,d)$ is $\mathcal{R}$-complete,
 \item $X(T,\mathcal{R})\neq \phi$,
 \item $\mathcal{R}$ is $T$-closed,
 \item either $T$ is $\mathcal{R}$-continuous or $\mathcal{R}|_Y$ is $d$-self-closed,
 \item there exists $\phi \in \Phi$ such that
 $$d(Tx,Ty)\leq \phi(M_T(x,y))~~~\forall x,y\in X ~\mbox{with}~(x,y)\in \mathcal{R},$$
 where $M_T(x,y)= \max \{d(x,y), d(x,Tx),d(y,Ty),\frac{d(x,Ty)+d(y,Tx)}{2}\}$.
 \end{enumerate}
 Then $F(T)\neq \phi.$
 \end{theorem}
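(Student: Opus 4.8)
The plan is to run the usual Picard iteration and feed it through the relation-theoretic machinery, with the $\mathcal{R}$-completeness of the intermediate set $Y$ playing the role usually taken by completeness of $X$. First I would pick $x_0 \in X(T,\mathcal{R})$, available by hypothesis (2), and set $x_{n+1}=Tx_n$ for all $n$. Because $\mathcal{R}$ is $T$-closed by (3), an immediate induction starting from $(x_0,Tx_0)=(x_0,x_1)\in\mathcal{R}$ shows that $(x_n,x_{n+1})\in\mathcal{R}$ for every $n$, i.e. the iteration is $\mathcal{R}$-preserving; this is precisely what lets us apply the contractive hypothesis (5) along consecutive iterates.

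Next I would show that the step sizes $d_n:=d(x_n,x_{n+1})$ decrease to $0$. Applying (5) to the pair $(x_{n-1},x_n)\in\mathcal{R}$ gives $d_n\le\phi(M_T(x_{n-1},x_n))$, and a short case analysis of $M_T(x_{n-1},x_n)=\max\{d_{n-1},d_{n-1},d_n,\tfrac{d(x_{n-1},x_{n+1})}{2}\}$ shows, via the triangle inequality, that this maximum reduces to $\max\{d_{n-1},d_n\}$. If it equalled $d_n$ we would get $d_n\le\phi(d_n)<d_n$, a contradiction; hence $M_T(x_{n-1},x_n)=d_{n-1}$ and $d_n\le\phi(d_{n-1})$. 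Iterating, $d_n\le\phi^n(d_0)\to 0$ by the defining property of the class $\Phi$.

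The crux is the Cauchy estimate, which I expect to be the main obstacle. I would argue by contradiction: if $(x_n)$ fails to be Cauchy, there exist $\varepsilon>0$ and subsequence indices $m_k<n_k$ along which $d(x_{m_k},x_{n_k})\ge\varepsilon$ while $d(x_{m_k},x_{n_k-1})<\varepsilon$. Combining the triangle inequality with $d_n\to 0$, one shows that the relevant distances $d(x_{m_k},x_{n_k})$, $d(x_{m_k+1},x_{n_k+1})$ and the two cross terms all converge to $\varepsilon$. Feeding the pair $(x_{m_k},x_{n_k})$ into (5) and passing to the limit then yields $\varepsilon\le\phi(\varepsilon)<\varepsilon$, a contradiction; hence $(x_n)$ is Cauchy. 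The difficulty here is that the mixed term $\tfrac{d(x,Ty)+d(y,Tx)}{2}$ in $M_T$ forces one to track four distances simultaneously and to invoke the appropriate semicontinuity and monotonicity of $\phi\in\Phi$ when taking the limit.

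Finally I would locate the limit and verify that it is fixed. For $n\ge 1$ we have $x_n=Tx_{n-1}\in TX\subseteq Y$, so the $\mathcal{R}$-preserving Cauchy sequence $(x_n)$ lies in $Y$; by the $\mathcal{R}$-completeness of $(Y,d)$ in (1) it converges to some $x^*\in Y$. If $T$ is $\mathcal{R}$-continuous, then $x^*=\lim x_{n+1}=\lim Tx_n=Tx^*$ and we are done. Otherwise $\mathcal{R}|_Y$ is $d$-self-closed, so some subsequence satisfies $(x_{n_k},x^*)\in\mathcal{R}$; applying (5) to these pairs, estimating $M_T(x_{n_k},x^*)$ through the convergences already established, and letting $k\to\infty$ forces $d(Tx^*,x^*)=0$. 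Hence $x^*\in F(T)$ and $F(T)\neq\emptyset$.
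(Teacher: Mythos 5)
The paper does not actually prove this statement: it is recorded verbatim as background (Theorem 2.1 of Ahmadullah et al.\ \cite{ahm}) and used only as a foil for the examples, so there is no proof in the paper to compare yours against. Judged on its own, your architecture (Picard iteration, $\mathcal{R}$-preservation via $T$-closedness, step sizes $d_n\to 0$, Cauchy estimate, then the $\mathcal{R}$-continuous / $d$-self-closed dichotomy at the limit) is the right skeleton, and the first, second and last steps are essentially sound.

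The genuine gap is in your Cauchy step. You feed the pair $(x_{m_k},x_{n_k})$ into hypothesis (5), but that hypothesis is only available for pairs lying in $\mathcal{R}$, and all you have established is that \emph{consecutive} iterates are related: $(x_n,x_{n+1})\in\mathcal{R}$. Since $\mathcal{R}$ is an arbitrary binary relation with no transitivity assumed anywhere in the statement, there is no reason why $(x_{m_k},x_{n_k})\in\mathcal{R}$ when $n_k>m_k+1$, so the contradiction $\varepsilon\le\phi(\varepsilon)$ cannot be derived. This is not a cosmetic issue: for a Boyd--Wong/\'Ciri\'c-type condition on $M_T$, the Cauchy property genuinely requires applying the contraction to non-consecutive pairs, which is why the original theorem of Ahmadullah et al.\ carries an additional hypothesis (local $T$-transitivity of $\mathcal{R}$) that has been dropped in the transcription above and is absent from your argument. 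The only way to avoid it is if the (undefined) class $\Phi$ is taken to consist of increasing $\phi$ with $\sum_n\phi^n(t)<\infty$, in which case $d(x_n,x_m)\le\sum_{j=n}^{m-1}\phi^j(d_0)$ gives Cauchyness from consecutive terms and the triangle inequality alone, making your contradiction argument unnecessary. As written, your proof silently assumes either transitivity of $\mathcal{R}$ or a summability property of $\phi$, and you should state which; a similar (smaller) caveat applies to the limit passage $\varepsilon\le\phi(\varepsilon)$ and to the final step, which need a right upper semicontinuity property of $\phi$ that must come from the definition of $\Phi$.
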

Next, we would like to draw the reader's attention in another direction of metric fixed point theory.
In 1996, Kada et al. \cite{kada} introduced the idea of $w$-distance in metric spaces and established  several well-known results using this concept. They defined the $w$-distance as follows:
\begin{definition}\cite{kada}\label{w}
Let $(X,d)$ be a metric space. A function $p:X\times X\rightarrow [0,\infty)$ is said to be a $w$-distance if 
\begin{enumerate}
\item[(w1)] $p(x,z)\leq p(x,y)+p(y,z)$ for any $x,y,z\in X$,
\item[(w2)]for any $x\in X, p(x,.):X\rightarrow [0,\infty)$ is lower-semi-continuous,
\item[(w3)] for any $\epsilon>0,$ there exists $\delta>0$ such that $p(z,x)\leq \delta$ and $p(z,y)\leq \delta$ imply $d(x,y)\leq \epsilon$.
\end{enumerate}
\end{definition}
\begin{remark}
Note that a $w$-distance function $p$ may not be symmetric and also it is possible that $p(x,x)\neq 0$ for some $x$, i.e., $p(x,y)=0$ does not imply $x=y$.
\end{remark}
The readers are refereed to \cite{kada} for some examples and crucial properties of $w$-distance.

To establish fixed point results owing to $w$-distance in metric spaces equipped with arbitrary binary relation $\mathcal{R}$, we need to define the concept of $\mathcal{R}$-lower-semi-continuity (briefly, $\mathcal{R}$-LSC) of a function and then we show that notion of $\mathcal{R}$-LSC is weaker than $\mathcal{R}$-continuity as well as lower-semi-continuity.

Before defining $\mathcal{R}$-lower-semi-continuity, we look back on $\mathcal{R}$-continuity of a function defined on a metric space equipped with an arbitrary binary relation $\mathcal{R}$.

\begin{definition}\cite{alm}
Let $(X,d)$ be a metric space and $\mathcal{R}$ be a binary relation defined on $X$. A function $f:X\rightarrow X$ is said to be $\mathcal{R}$-continuous at $x$ if for every $\mathcal{R}$-preserving sequence $(x_n)$ converging to $x$, we have, 
$$f(x_n)\rightarrow f(x)~\mbox{as}~n\rightarrow \infty.$$
\end{definition}
 The notion of $\mathcal{R}$-lower-semi-continuity of a function is defined as follows:
\begin{definition}\label{dfn5}
Let $(X,d)$ be a metric space and $\mathcal{R}$ be a binary relation defined on $X$. A function $f:X\rightarrow \R\cup{ \{-\infty,\infty\}}$ is said to be $\mathcal{R}$-LSC at $x$ if for every $\mathcal{R}$-preserving sequence $(x_n)$ converging to $x$, we have, 
$$\displaystyle{\liminf_{ n\rightarrow \infty}}f(x_n)\geq f(x).$$
\end{definition}
 
The following example shows that $\mathcal{R}$-LSC is weaker than  $\mathcal{R}$-continuity.
\begin{example}
Let $X=\mathbb{R}$. Define $(x,y)\in \mathcal{R}$ if 
$x,y\in (n-\frac{1}{5},n+\frac{1}{5})~\mbox{for~ some}~n\in \mathbb{Z}$. We consider the standard metric $d$ on $X$. Let $f:X\rightarrow X$ be defined as $$f(x)=\lceil x \rceil.$$
We claim that this function is not $\mathcal{R}$-continuous but it is $\mathcal{R}$-lower semi-continuous. Let $(x_n)$ be a non-constant $\mathcal{R}$-preserving sequence converging to an integer $k$. Then there exists some $n_o\in \mathbb{N}$ such that $x_n\in (k-\frac{1}{5},k+\frac{1}{5})$ for all $n>n_0$. Now, if $x_n\rightarrow k$ from left, then $\lim_{n\rightarrow \infty}f(x_n)=k$ and if $x_n\rightarrow k$ from right, then $\lim_{n\rightarrow \infty}f(x_n)=k+1$. Therefore, we have
$$\displaystyle{\liminf_{ n\rightarrow \infty}}f(x_n)\geq f(k).$$ 
This shows that $f$ is an $\mathcal{R}$-lower semi-continuous function but it is not $\mathcal{R}$-continuous.
\end{example} 

The next illustrative example shows that $\mathcal{R}$-LSC is in fact weaker than lower-semi-continuity.
\begin{example}
Let $X=[0,\infty)$ and  $d$ be the standard metric on $X$. We define $(x, y)\in \mathcal{R} ~\mbox{ if }~xy\leq x ~\mbox{or}~ y.$  Let $f:X\rightarrow X$ be defined as
 \[ f(x) = \left\{ \begin{array}{ll}
2 & {x\in [0,1)};\\
 1 & {x=1 };\\
 \frac{1}{2}&{x>1}.\end{array} \right. \]
We show that this function is neither lower semi-continuous nor $\mathcal{R}$-continuous but it is an $\mathcal{R}$-lower semi-continuous function. We consider the point $x=1$. Let $(x_n)$ be a non-constant sequence converging to $1$. So we have either $f(x_n)=2$ or $f(x_n)=\frac{1}{2}$ for all $n\in \mathbb{N}$ which shows that $$\displaystyle{\liminf_{ n\rightarrow \infty}}f(x_n)\geq f(1)$$ does not hold always. Hence, it is not a lower semi-continuous function at $x=1$. Similarly, one can check that this is not $\mathcal{R}$-continuous. Next, we show that this is an $\mathcal{R}$-lower semi-continuous function. Let us consider $(x_n)$ be an $\mathcal{R}$-preserving sequence converging to $1$. Then, for all $n\in \mathbb{N}, (x_n, x_{n+1})\in \mathcal{R}\Rightarrow x_nx_{n+1}\leq x_n~\mbox{or}~ x_{n+1}$ implies the following two cases:
 \begin{enumerate}
 \item $x_n=1$ for all $n\in \N$ and $f(x_n)=1=f(1)$.
 \item If $(x_n)$ be a non-constant $\mathcal{R}$-preserving sequence, then for all $n\in \N$, we must have $x_n<1$  and $f(x_n)=2$. Therefore,  $$\displaystyle{\liminf_{ n\rightarrow \infty}}f(x_n)\geq f(1).$$ 
\end{enumerate}
This implies that $f$ is an $\mathcal{R}$-lower semi-continuous function.
\end{example}
From the above two examples it is clear that $\mathcal{R}$-LSC is weaker than $\mathcal{R}$-continuity as well as lower-semi-continuity.
\begin{remark}
Every lower semi-continuous function is $\mathcal{R}$-lower-semi-continuous but the converse is not true. If $\mathcal{R}$ is  a universal relation, then the notions of lower-semi-continuity and $\mathcal{R}$-lower-semi-continuity will  coincide.
\end{remark} 
Now, we modify the definition of $w$-distance (Definition-\ref{w}) and the corresponding Lemma $1$ presented in \cite{kada} in the context of metric spaces endowed with an arbitrary binary relation $\mathcal{R}$.
\begin{definition} \label{w1}
Let $(X,d)$ be a metric space and $\mathcal{R}$ be a binary relation on $X$. A function $p:X\times X\rightarrow [0,\infty)$ is said to be a $w$-distance on $X$ if 
\begin{enumerate}
\item[$(w1')$] $p(x,z)\leq p(x,y)+p(y,z)$ for any $x,y,z\in X$,
\item[$(w2')$] for any $x\in X$, $p(x,.):X\rightarrow [0,\infty)$ is $\mathcal{R}$-lower semi-continuous,
\item[$(w3')$] for any $\epsilon>0$, there exists $\delta>0$ such that $p(z,x)\leq \delta$ and $p(z,y)\leq \delta$ imply $d(x,y)\leq \epsilon$.
\end{enumerate}
\end{definition} 
To prove our main results, we need the following lemma.
\begin{lemma}\label{L2}
Let $(X,d)$ be a metric space endowed with binary relation $\mathcal{R}$ and $p:X\times X\rightarrow [0,\infty)$ be a $w$-distance. Suppose $(x_n)$ and $(y_n)$ are two $\mathcal{R}$-preserving sequences in $X$ and $x,y,z\in X$. Let $(u_n)$ and $(v_n)$ be sequences of positive real numbers converging to $0$. Then we have the followings:
\begin{enumerate}
\item[(L1)]If $p(x_n,y)\leq u_n$ and $p(x_n,z)\leq v_n$ for all $n\in \N$, then $y=z$. Moreover, if $p(x,y)=0$ and $p(x,z)=0$, then $y=z$.
\item[(L2)] If $p(x_n,y_n)\leq u_n$ and $p(x_n,z)\leq v_n$ for all $n\in \N$, then $(y_n)\rightarrow z$.
\item[(L3)] If $p(x_n,x_m)\leq u_n$ for all $m>n$, then $(x_n)$ is an  $\mathcal{R}$-preserving Cauchy sequence in $X$.
\item[(L4)] If $p(x_n,y)\leq u_n$ for all $n\in \N$, then $(x_n)$ is an  $\mathcal{R}$-preserving Cauchy sequence in $X$.
\end{enumerate}
\end{lemma}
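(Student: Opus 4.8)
The plan is to treat all four items as consequences of a single mechanism: among the three axioms, $(w3')$ is the only one that converts smallness of the $w$-distance $p$ into smallness of the metric $d$, so it must do essentially all of the work; $(w1')$ serves merely as an auxiliary triangle estimate, and $(w2')$ ($\mathcal{R}$-lower-semi-continuity) plays no role here, entering only later in the fixed point theorem. The uniform template I would use is the following: given $\epsilon>0$, produce the $\delta>0$ furnished by $(w3')$; then exploit $u_n\to 0$ and $v_n\to 0$ to select an index beyond which the relevant $p$-values fall below $\delta$; feed those two $p$-bounds into $(w3')$ to obtain an estimate of the form $d(\cdot,\cdot)\le\epsilon$; and finally either let $\epsilon$ run to $0$ (for the equality and Cauchy claims) or read off convergence directly.

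For (L1) I would fix $\epsilon>0$, take the associated $\delta$, and choose $N$ so that $u_n\le\delta$ and $v_n\le\delta$ for $n\ge N$. Then $p(x_N,y)\le\delta$ and $p(x_N,z)\le\delta$, and since these two bounds share the common first coordinate $x_N$, axiom $(w3')$ yields $d(y,z)\le\epsilon$; as $\epsilon$ was arbitrary, $d(y,z)=0$, i.e. $y=z$. The \emph{moreover} clause is the degenerate case: from $p(x,y)=0\le\delta$ and $p(x,z)=0\le\delta$ one gets $d(y,z)\le\epsilon$ for every $\epsilon$, hence $y=z$, with no sequence needed. For (L2) I would run the identical estimate but \emph{without} sending $\epsilon$ to $0$: for each $\epsilon$ there is $N$ with $p(x_n,y_n)\le\delta$ and $p(x_n,z)\le\delta$ for all $n\ge N$, whence $d(y_n,z)\le\epsilon$ for $n\ge N$, which is precisely $y_n\to z$.

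For (L3) the hypothesis $p(x_n,x_m)\le u_n$ (for $m>n$) is already tailored to $(w3')$: given $\epsilon$, pick $\delta$ and then $N$ with $u_N\le\delta$; for any $m,l>N$ we have $p(x_N,x_m)\le u_N\le\delta$ and $p(x_N,x_l)\le u_N\le\delta$, two bounds with the common first coordinate $x_N$, so $(w3')$ gives $d(x_m,x_l)\le\epsilon$. Thus the tail is uniformly $\epsilon$-small and $(x_n)$ is Cauchy, while $\mathcal{R}$-preservation is inherited from the hypothesis. Item (L4) is meant to follow the same pattern, and here lies the one genuinely delicate point, which I expect to be the main obstacle: since $p$ is in general not symmetric, $(w3')$ can be invoked only when the two competing $p$-estimates share their \emph{first} coordinate. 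The bound $p(x_n,y)\le u_n$ places the common point $y$ in the \emph{second} slot, so reconciling it with the one-sided premise of $(w3')$ is exactly the step requiring care. The natural route is to feed $(w3')$ a common first argument, i.e. to read the hypothesis with $y$ as the common first coordinate so that it mirrors (L3) verbatim and delivers $d(x_m,x_l)\le\epsilon$ for large $m,l$; attempting instead to manufacture such bounds via the triangle inequality $(w1')$ does not close up without extra control on the reverse $p$-values, which is why the coordinate placement, rather than any deeper estimate, is the crux.
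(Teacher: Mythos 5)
The paper offers no proof to compare against: it states only that the proof ``can be done in the line of Lemma 1 in \cite{kada}.'' Your argument for (L1)--(L3) is precisely that standard argument --- fix $\epsilon$, take the $\delta$ from $(w3')$, push the tail of $(u_n)$, $(v_n)$ below $\delta$, and feed two $p$-bounds with a \emph{common first coordinate} into $(w3')$ --- and it is correct as written, including the observation that $(w2')$ is idle here and that $\mathcal{R}$-preservation in (L3), (L4) is simply inherited from the hypothesis.

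Your unease about (L4) is the most valuable part of the proposal, and it can be made sharper: as printed, (L4) is not merely hard to prove, it is \emph{false}. Take $X=\mathbb{R}$ with the usual metric, $\mathcal{R}$ the universal relation (so every sequence is $\mathcal{R}$-preserving), and $p(x,y)=|y|$, which is a genuine $w$-distance: $(w1')$ holds since $|z|\leq |y|+|z|$, $p(x,\cdot)$ is continuous, and $p(z,x)\leq\delta$, $p(z,y)\leq\delta$ give $d(x,y)\leq 2\delta$. With $x_n=(-1)^n$, $y=0$, $u_n=1/n$ one has $p(x_n,y)=0\leq u_n$ for all $n$, yet $(x_n)$ is not Cauchy. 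The statement in Kada et al.\ is $p(y,x_n)\leq u_n$, with the common point in the \emph{first} slot; under that reading your (L3)-style argument closes immediately ($p(y,x_m)\leq u_N\leq\delta$ and $p(y,x_l)\leq u_N\leq\delta$ for $m,l\geq N$ yield $d(x_m,x_l)\leq\epsilon$). So your proposal to ``read the hypothesis with $y$ as the common first coordinate'' is not a proof of the printed statement but the correct repair of what is evidently a transcription error in the paper; since Theorem \ref{thm1} only invokes (L1) and (L3), this slip does not propagate, but the lemma should be corrected rather than proved as stated.
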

\begin{proof}
Proof is omitted as it can done be in the line of Lemma $1$ in \cite{kada}.
\end{proof}
\begin{remark}
Under the universal binary relation $\mathcal{R}$, Definition \ref{w1} will coincide with Definition \ref{w} and the Lemma \ref{L2} will coincide with Lemma 1 in \cite{kada}.
\end{remark}
Now we are in a position to state our main results. Before starting these, we highlight our main objectives which rest on the following considerations:
\begin{enumerate}
\item[$\bullet$] We refine the main result of Alam and Imdad (Theorem 3.1 in \cite{alm}) by considering more general distance function ($w$-distance) instead of the standard distance function on metric space endowed with an arbitrary binary relation and correspondingly we use a more general contraction principle. 
\item[$\bullet$] We present some non-trivial examples which lead to realize the sharpness of our obtained results.
\item[$\bullet$] Finally, we present an application to establish the existence and uniqueness of solutions of nonlinear fractional differential equations. 
\end{enumerate}
\baselineskip .55 cm

\section{\bf Main Results}
We start this section by extending the relation-theoretic version of Banach contraction principle owing to $w$-distance.
\begin{theorem}\label{thm1}
Let $(X,d)$ be a metric space with a $w$-distance $p$ and $\mathcal{R}$ be any arbitrary binary relation on $X$. Suppose $T$ is a self-map on $X$ with following conditions: 
\begin{enumerate}
\item there exists $Y\subseteq X~\mbox{ with }~T(X)\subseteq Y$ such that $(Y,d)$ is $\mathcal{R}$-complete,
\item $X(T,\mathcal{R})\neq \phi$ and $\mathcal{R}$ is $T$-closed,
%\item 
\item either $T$ is $\mathcal{R}$-continuous or $\mathcal{R}|_Y$ is $d$-self-closed,
\item there exists $ \lambda \in [0,1)$ such that 
$$p(Tx,Ty)\leq \lambda p(x,y)~~~\forall x,y\in X ~\mbox{with}~(x,y)\in \mathcal{R}$$
\end{enumerate}
then $F(T)\neq \phi.$    
\end{theorem}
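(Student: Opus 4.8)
The plan is to run the Picard iteration and show, entirely in terms of the $w$-distance $p$ rather than the metric $d$, that it is an $\mathcal{R}$-preserving Cauchy sequence whose limit is a fixed point, mirroring the classical Banach argument. By hypothesis (2) I pick $x_0\in X(T,\mathcal{R})$, so $(x_0,Tx_0)\in\mathcal{R}$, and set $x_{n+1}=Tx_n$. Since $\mathcal{R}$ is $T$-closed, induction gives $(x_n,x_{n+1})\in\mathcal{R}$ for every $n$, so $(x_n)$ is $\mathcal{R}$-preserving; this is exactly what licenses the application of the contraction hypothesis (4) along consecutive iterates.

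First I would iterate (4) to get the geometric decay $p(x_n,x_{n+1})\le\lambda^n\,p(x_0,x_1)$, then telescope using the triangle inequality $(w1')$ to obtain, for all $m>n$,
$$
p(x_n,x_m)\le\frac{\lambda^n}{1-\lambda}\,p(x_0,x_1)=:u_n,\qquad u_n\to 0.
$$
Lemma \ref{L2}(L3) then yields that $(x_n)$ is an $\mathcal{R}$-preserving Cauchy sequence. Because $x_n=Tx_{n-1}\in T(X)\subseteq Y$ for $n\ge 1$ and $(Y,d)$ is $\mathcal{R}$-complete by (1), the sequence converges to some $z\in Y$.

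The crux is to show $Tz=z$, where the asymmetry of $p$ (and the possibility $p(z,z)\neq 0$) demands care. The key preliminary estimate is $p(x_n,z)\le u_n$ for every $n$: fixing $n$, the tail $(x_m)_{m>n}$ is $\mathcal{R}$-preserving and converges to $z$, so $\mathcal{R}$-lower-semicontinuity $(w2')$ of $p(x_n,\cdot)$ gives $p(x_n,z)\le\liminf_{m\to\infty}p(x_n,x_m)\le u_n$. When $T$ is $\mathcal{R}$-continuous the conclusion is immediate: $x_{n+1}=Tx_n\to Tz$ while also $x_{n+1}\to z$, forcing $z=Tz$ by uniqueness of $d$-limits.

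In the remaining case $\mathcal{R}|_Y$ is $d$-self-closed, so there is a subsequence $(x_{n_k})$ with $(x_{n_k},z)\in\mathcal{R}$. Applying (4) yields $p(x_{n_k+1},Tz)=p(Tx_{n_k},Tz)\le\lambda\,p(x_{n_k},z)\le\lambda u_{n_k}\to 0$, while simultaneously $p(x_{n_k+1},z)\le u_{n_k+1}\to 0$. Feeding these two vanishing one-sided bounds into property $(w3')$ (equivalently, Lemma \ref{L2}(L1)) forces $d(Tz,z)=0$, i.e.\ $Tz=z$. I expect this last case to be the main obstacle: one cannot invoke continuity, so the estimate $p(x_n,z)\le u_n$ must be squeezed out of $\mathcal{R}$-lower-semicontinuity, and the two asymmetric $w$-distance bounds must then be converted into a genuine metric equality through $(w3')$ — the one place where the defining axioms of the $w$-distance are essential.
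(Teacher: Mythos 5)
Your proposal is correct and follows essentially the same route as the paper: Picard iteration, $T$-closedness to get an $\mathcal{R}$-preserving sequence, geometric decay and telescoping via $(w1')$, Lemma \ref{L2}(L3) for Cauchyness, and then the two cases — $\mathcal{R}$-continuity handled by uniqueness of limits, and the $d$-self-closed case handled by extracting the subsequence, using $\mathcal{R}$-lower-semicontinuity to get $p(x_{n_k},\tilde{x})\to 0$, applying the contraction once more, and invoking Lemma \ref{L2}(L1). Your write-up is in fact slightly cleaner in the indices than the paper's, but the argument is the same.
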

\begin{proof}
As $X(T,\mathcal{R})\neq \phi$, so there exists a point $x_0\in X(T,\mathcal{R}) $ such that $(x_0,Tx_0)\in \mathcal{R}$. Now, we define a sequence $(x_n)$ by $x_n=T(x_{n-1})=T^n(x_0)$. By the property of $T$-closedness of $\mathcal{R}$, one can easily check that $(x_n)$ is an $\mathcal{R}$-preserving sequence that is  $$(x_n,x_{n+1})\in \mathcal{R}~\mbox{for all}~ n\in \mathbb{N}\cup\{0\}.$$ Applying the contraction principle of above theorem, we derive
\begin{eqnarray*}
p(Tx_{n-1},Tx_{n}) &\leq & \lambda p(x_{n-1},x_{n})\\
 \Rightarrow p(x_{n},x_{n+1})&\leq & \lambda p(x_{n-1},x_{n})\\
 &\leq & \lambda^2 p(x_{n-2},x_{n-1})\\
 &\vdots &\\
 &\leq & \lambda^n p(x_{0},x_1). 
\end{eqnarray*}
Using this for all $m>n$, we have,
\begin{eqnarray}
p(x_n,x_{m}) &\leq & p(x_n,x_{n+1})+p(x_{n+1},x_{n+2})+\cdots+p(x_{m-1},x_{m})\nonumber\\
&\leq & p(x_0,,x_{1})[\lambda^n+\lambda^{n+1}+\cdots +\lambda^{m-1}]\nonumber\\
&\leq & \frac{\lambda^n}{1-\lambda} p(x_0,x_{1}).\label{e1}
\end{eqnarray}
Let us define $u_n=\frac{\lambda^n}{1-\lambda} p(x_0,x_{1})$. Clearly $u_n\rightarrow 0$ as $n\rightarrow \infty$. So by $(L3)$, we must have that $(x_n)$ is an  $\mathcal{R}$-preserving Cauchy sequence in $Y$. Since $(Y,d)$ is $\mathcal{R}$-complete, so $x_n\rightarrow \tilde{x}$ as $n\rightarrow \infty$ for some $\tilde{x}\in Y$. 

Next, we show that $\tilde{x}$ is a fixed point of $T$. In order to prove this, at first we consider that $T$ is $\mathcal{R}$-continuous.

By using $\mathcal{R}$-continuity of $T$, we obtain
$$d(\tilde{x},T\tilde{x})=\displaystyle{\lim_{n\rightarrow \infty}}d(x_{n+1},T\tilde{x})=\displaystyle{\lim_{n\rightarrow \infty}}d(T(x_{n}),T\tilde{x})=d(T\tilde{x},T\tilde{x})=0.$$
This shows that $\tilde{x}$ is a fixed point of $T$.

Alternatively, we consider that $\mathcal{R}|_Y$ is $d$-self-closed. So, we must have a subsequence $(x_{n_k})$ of $(x_n)$ with $(x_{n_k},\tilde{x})\in \mathcal{R}$ for all $k\in \N\cup \{0\}$. Combining the Equation \ref{e1} with $\mathcal{R}$-lower-semi-continuity of $p$, we get
$$p(x_{n_{k}+1},\tilde{x})\leq \displaystyle{\liminf_{k\rightarrow \infty}}p(x_{n_{k}+1},x_{n_{k}+m}) \leq \displaystyle{\liminf_{k\rightarrow \infty}}\frac{\lambda^{n_{k}-1}}{1-\lambda} p(x_0,x_1)=0.$$
Since $\mathcal{R}$ is $T$-closed and $(x_{n_k}, \tilde{x})\in \mathcal{R}$, so $$p(Tx_{n_k},T\tilde{x})\leq \lambda p(x_{n_k},\tilde{x})\leq \lambda \displaystyle{\liminf_{k\rightarrow \infty}}p(x_{n_k},x_{n_{k}+m}) \leq \displaystyle{\liminf_{k\rightarrow \infty}}\frac{\lambda^{n_{k}+1}}{1-\lambda} p(x_0,x_1)=0.$$
By $(L1)$ of Lemma \ref{L2}, we must have $T\tilde{x}=\tilde{x}$, i.e., $\tilde{x}$ is a fixed point of $T$.
\end{proof}
The following theorem ensures the uniqueness of fixed point of $T$. We like  to provide an additional condition to the hypotheses of Theorem \ref{thm1} to ensure that the fixed point in Theorem 2.1 is in fact unique if any of the following conditions holds.
\begin{eqnarray}
\bf{\mbox{ \textbf{For every }} x,y \in T(X), \exists z\in T(X) \mbox{ \textbf{such that} }(z,x), (z,y)\in \mathcal{R}.} \label{e1}\\
\bf{\mathcal{R}|_{TX} \mbox{ \textbf{is complete.} }} \label{e2}
\end{eqnarray}

\begin{theorem}\label{thm2} In addition to the hypotheses of Theorem \ref{thm1}, suppose that any of the condition (\ref{e1}) or condition (\ref{e2}) holds. Then we obtain the uniqueness of fixed point of $T$.
\end{theorem}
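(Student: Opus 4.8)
The plan is to take two arbitrary fixed points $\tilde{x},\tilde{y}$ of $T$ and to show $\tilde{x}=\tilde{y}$, handling the two sufficient conditions separately. The first thing I would record is that every fixed point lies in $T(X)$, since $\tilde{x}=T\tilde{x}\in T(X)$; this is exactly what lets the directedness and completeness hypotheses, both phrased in terms of points of $T(X)$, be applied to $\tilde{x}$ and $\tilde{y}$. Throughout I will use repeatedly that $T$-closedness propagates the relation along orbits: if $(a,\tilde{x})\in\mathcal{R}$ then $(T^{n}a,T^{n}\tilde{x})=(T^{n}a,\tilde{x})\in\mathcal{R}$ for every $n$, so the contractive inequality of Theorem \ref{thm1} can be iterated along such an orbit.

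Assume first that condition (\ref{e1}) holds, i.e.\ for every $x,y\in T(X)$ there is $z\in T(X)$ with $(z,x),(z,y)\in\mathcal{R}$. Applying it to $x=\tilde{x}$, $y=\tilde{y}$ produces $z\in T(X)$ with $(z,\tilde{x}),(z,\tilde{y})\in\mathcal{R}$. Using $T$-closedness to keep $(T^{n-1}z,\tilde{x})\in\mathcal{R}$ and iterating the contraction together with $T\tilde{x}=\tilde{x}$, I would obtain
\[
p(T^{n}z,\tilde{x})=p(T^{n}z,T^{n}\tilde{x})\leq \lambda^{n}p(z,\tilde{x})\to 0 \text{ as } n\to\infty,
\]
and likewise $p(T^{n}z,\tilde{y})\leq\lambda^{n}p(z,\tilde{y})\to 0$. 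Writing $x_{n}=T^{n}z$, $u_{n}=\lambda^{n}p(z,\tilde{x})$ and $v_{n}=\lambda^{n}p(z,\tilde{y})$, these are precisely the data of (L1) in Lemma \ref{L2}, which then forces $\tilde{x}=\tilde{y}$.

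If instead condition (\ref{e2}) holds, so that $\mathcal{R}|_{TX}$ is complete, then both $[\tilde{x},\tilde{x}]$ and $[\tilde{x},\tilde{y}]$ lie in $\mathcal{R}$. From $(\tilde{x},\tilde{x})\in\mathcal{R}$ the contraction gives $p(\tilde{x},\tilde{x})=p(T\tilde{x},T\tilde{x})\leq\lambda\, p(\tilde{x},\tilde{x})$, whence $p(\tilde{x},\tilde{x})=0$ since $\lambda<1$. Taking (without loss of generality) the ordering with $(\tilde{x},\tilde{y})\in\mathcal{R}$, the same one-step estimate yields $p(\tilde{x},\tilde{y})\leq\lambda\, p(\tilde{x},\tilde{y})$, hence $p(\tilde{x},\tilde{y})=0$. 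The second assertion of (L1), applied with the single point $\tilde{x}$ and the two targets $\tilde{x},\tilde{y}$, then gives $\tilde{x}=\tilde{y}$.

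The only genuinely delicate point I foresee is the application of (L1) under condition (\ref{e1}): Lemma \ref{L2} is stated for $\mathcal{R}$-preserving sequences, whereas the orbit $(T^{n}z)$ need not be $\mathcal{R}$-preserving, since we control the pairs $(T^{n}z,\tilde{x})$ and $(T^{n}z,\tilde{y})$ but have no information on $(T^{n}z,T^{n+1}z)$. I would resolve this by noting that the implication ``$p(x_{n},\tilde{x})\to 0$ and $p(x_{n},\tilde{y})\to 0$ imply $\tilde{x}=\tilde{y}$'' rests only on axiom $(w3')$ and is insensitive to the $\mathcal{R}$-preserving assumption; alternatively one can reduce to the second assertion of (L1) after first forcing the relevant $p$-values to zero, as in the completeness case. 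Everything else is a routine iteration of the contractive estimate, made available once the fixed points are recognized to lie in $T(X)$.
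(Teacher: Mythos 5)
Your proposal is correct and follows the same two-case strategy as the paper: under condition (\ref{e1}) you iterate the contraction along the orbit of the common predecessor $z$ and invoke (L1), exactly as the paper does; under condition (\ref{e2}) you use completeness of $\mathcal{R}|_{TX}$ and the one-step contractive estimate. In the second case your argument is actually more careful than the paper's: the paper deduces $p(\tilde{x},\tilde{y})\leq\lambda p(\tilde{x},\tilde{y})<p(\tilde{x},\tilde{y})$ and declares a contradiction, but this only shows $p(\tilde{x},\tilde{y})=0$, which for a $w$-distance does not by itself imply $\tilde{x}=\tilde{y}$; you close that gap by also extracting $p(\tilde{x},\tilde{x})=0$ from $(\tilde{x},\tilde{x})\in\mathcal{R}$ and then applying the second assertion of (L1). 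Likewise, your observation that the orbit $(T^{n}z)$ need not be $\mathcal{R}$-preserving, and that the conclusion of (L1) rests only on axiom $(w3')$, addresses a point the paper passes over in silence. Both refinements are worth keeping.
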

\begin{proof}
We prove the theorem by considering following two possible cases.

\noindent{\textbf{Case I:}} Let in addition to the hypotheses of Theorem \ref{thm1}, condition (\ref{e1}) hold.  Then, for any two fixed points $\tilde{x},\tilde{y}$  of $T$, there must be an element $z\in T(X)$ such that $$(z,\tilde{x})\in \mathcal{R} ~\mbox{and}~ (z,\tilde{y})\in \mathcal{R}.$$ As $\mathcal{R}$ is $T$-closed, so for all $n\in \mathbb{N}\cup\{0\}$, 
$$(T^n(z),\tilde{x})\in \mathcal{R} ~\mbox{and}~ (T^n(z),\tilde{y})\in \mathcal{R}.$$
Using contractivity condition of $T$, we get
$$p(T^n(z),\tilde{x})=p(T^n(z),T^{n}\tilde{x})\leq \lambda^{n}p(z,\tilde{x})$$
and
$$p(T^n(z),\tilde{y})=p(T^n(z),T^n\tilde{y})\leq \lambda^{n}p(x_0,\tilde{y}).$$
Let us consider $u_n=\lambda^{n+1}p(z,\tilde{x})$ and $v_n=\lambda^{n+1}p(z,\tilde{y}).$ Clearly $(u_n)$ and $(v_n)$ are two sequences of real numbers converging to $0$. Hence by $(L1)$ of Lemma \ref{L2}, we obtain $\tilde{x}=\tilde{y}$, i.e., $T$ has a unique fixed point.

\noindent{\textbf{Case II:}}
Let in addition to the hypotheses of Theorem \ref{thm1}, condition (\ref{e2}) hold. Suppose $\tilde{x},\tilde{y}$ are two fixed points of $T$. Then we must have $(\tilde{x},\tilde{y})\in \mathcal{R}$ or $(\tilde{y},\tilde{x})\in \mathcal{R}$. For $(\tilde{x},\tilde{y})\in \mathcal{R}$, we obtain
 $$p(\tilde{x},\tilde{y})=p(T(\tilde{x}),T(\tilde{y}))\leq \lambda p(\tilde{x},\tilde{y})<p(\tilde{x},\tilde{y})$$ which leads to a contradiction. Hence, we must have $\tilde{x}=\tilde{y}$.
 
 In similar way, if $(\tilde{y},\tilde{x})\in \mathcal{R}$, we have $\tilde{x}=\tilde{y}$.
\end{proof}
In order to signify the motivations of our investigation, we present following examples.
\begin{example}\label{ex4}
Let $(X,d)$ be a metric space where $X=[1,3)$ and d is the standard metric define on $X$. We define a binary relation $\mathcal{R}=\{(x,y)\in X^2:x\geq y\}$. Let $T$ be a self-map on $X$ defined by 
\[ T(x) = \left\{ \begin{array}{ll}
\frac{x}{2}, & { x\in [1,2)};\\
 2, & {x\in [2,3)}.\end{array} \right. \]
 Now we check the hypotheses of Theorem 3.1 given in Alam and Imdad \cite{alm}.
 \begin{enumerate} 
 \item Let $Y=[1,2]$. Then it is clear that $T(X)\subseteq Y$ and $(Y,d)$ is $\mathcal{R}$-complete.
 \item For $x=1, T(x)=\frac{1}{2}$ such that $(x,Tx)\in \mathcal{R}$, i.e., $X(T,\mathcal{R})\neq \phi$.
 \item We show that $\mathcal{R}|_Y$ is $d$-self-closed. Let $(x_n)$ be an $\mathcal{R}$-preserving sequence converges to $x$. So for all $n\in \N, (x_n,x_{n+1})\in \mathcal{R}$, i.e., $x_n\geq x_{n+1}$ for all $n\in \N$ which implies that $(x_n)$ is a decreasing sequence converging to $x$. So, we must have that $(x_n,x)\in \mathcal{R}$ for all $n\in \N$. Hence, $\mathcal{R}|_Y$ is $d$-self-closed. 
 \item Now we show that we can't employ the contraction principle given in Theorem 3.1 in Alam and Imdad \cite{alm}.
 
 For example, we consider $x=2,y=1$. Then, clearly $(x,y)\in \mathcal{R}$ and $Tx=2, ~Ty=\frac{1}{2}$. Then
 $d(Tx,Ty)=d(2,\frac{1}{2})=\frac{3}{2}$ and $d(x,y)=1$. So we can't find any $k\in [0,1)$ such that $$d(Tx,Ty)\leq kd(x,y)$$ holds. But if we choose a $w$-distance function $p$ as $p(x,y)=|x|+|y|$, then for all $x,y\in X$, we have $$p(Tx,Ty)\leq \lambda p(x,y)$$ with $(x,y)\in \mathcal{R}$ and for some $\lambda\in [0,1)$.
 \end{enumerate}
Hence all the hypotheses of our theorem satisfy and note that $x=2$ is a fixed point of $T$ and it is the unique fixed point.
 
 \noindent{\textbf{Note:}}  It is worth mentioning that the results of Ahmadullah et el. \cite{ahm} are more generalized and improved version than that of Alam and Imdad \cite{alm} 
 but still in that example, we can't employ the main result (Theorem 2.1) of Ahmadullah et al. \cite{ahm}. For $x=2,y=1$, we obtain:
 $$M_T(2,1)=\max\{d(2,1),d(1,\frac{1}{2}),d(2,2),\frac{d(1,2)+d(2,\frac{1}{2})}{2}\}=\frac{5}{4}.$$
 In Theorem 2.1 given in \cite{ahm}, as $\phi$ is a function with $\phi(t)<t,t>0$, so we can't find any function $\phi$ with that property so that $$d(Tx,Ty)\leq \phi(M_T(x,y))$$ holds.  Hence, we can't employ the results of Ahmadullah et al. \cite{ahm} in that example.
\end{example}
Next, we furnish another important example.
\begin{example}\label{ex5}
Let us consider the metric space $(X,d),$ where $X=[0,2]$, $d$ is the standard metric on $X$ and $(x,y)\in \mathcal{R}$ if $xy\leq x~\mbox{or}~y$. We define a $w$-distance $p:X\times X\rightarrow X$ by $p(x,y)=y$. Let us define a function $T:X\rightarrow X$ by
\[ T(x) = \left\{ \begin{array}{ll}
\frac{x}{3}, & {0\leq x\leq \frac{2}{3}};\\
 1-x, & {\frac{2}{3}<x< 1 };\\
 \frac{3}{4}, & x=1;\\
 x-\frac{1}{2},&{x>1}.\end{array} \right. \]
Now if $(x,y)\in \mathcal{R}$, then $xy\leq x~or~ y$. Let us consider $xy\leq x$. So we have the following cases:
 
\textbf{Case 1}: Let $x=0$. Then for any $y\in [0,2], ~(x,y)\in \mathcal{R}$. So we get:
\begin{enumerate}
\item[(i)] for $0\leq y\leq \frac{2}{3},$ then $Tx=0$ and $Ty=\frac{y}{3}$. So, $p(Tx,Ty)=Ty=\frac{y}{3}$ and $p(Tx,Ty)=\frac{y}{3}\leq \frac{1}{3}p(x,y)$,
\item[(ii)] if $\frac{2}{3}<y<1,$ then $Ty\in(0,\frac{1}{3})$ and $p(Tx,Ty)=1-y<y=p(x,y)$. In particular, $p(Tx,Ty)\leq kp(x,y),$ where $k\in [\frac{1}{2},1)$,
\item[(iii)]let $y=1$. Then $p(T0,T1)=\frac{3}{4}\leq \frac{3}{4}p(0,1)$,
\item[(iv)] for $y>1$, we have $p(Tx,Ty)=y-\frac{1}{2}\leq ky=kp(x,y),$ where $k\in [\frac{3}{4},1)$.
\end{enumerate}
 
\textbf{Case 2}: For all $y\in [0,2]$ and $x=0$, we have $p(Ty,Tx)=0=kp(y,x)$ for all $k\in [0,1)$.
 
 \textbf{Case 3}: Let $x\neq 0$. Then $y\leq 1$. So, we have:
 \begin{enumerate}
 \item[(i)] for $0\leq y\leq \frac{2}{3}$, $p(Tx,Ty)\leq \frac{1}{3}p(x,y)$,
 \item[(ii)] for $\frac{2}{3}<y< 1$, $p(Tx,Ty)\leq kp(x,y)$, where $k\in [\frac{1}{2},1)$,
 \item[(iii)] for $y=1$, $p(Tx,T1)=\frac{3}{4}\leq \frac{3}{4}p(x,1)$ for all $x\in X$,
 \item[(iv)] for $y\leq 1$ and $x>1$, we have $p(Ty,Tx)\leq kp(y,x),$ where $k\in [\frac{3}{4},1)$.
 \end{enumerate}
 The above three cases show that $T$ satisfies the condition (5) of Theorem \ref{thm1}.  Next, we check the remaining hypotheses of our theorem.
\begin{enumerate}
\item Let us consider $Y=[0,\frac{3}{2}]$. Then we must have $TX\subseteq Y$ and $\mathcal{R}|_Y$ is $\mathcal{R}$-complete.
\item Clearly, $X(T,\mathcal{R})\neq \phi.$
\item $\mathcal{R}$ is $T$-closed.
\item Note that $T$ is not $\mathcal{R}$-continuous at $x=1$ and $x=\frac{2}{3}$. But $\mathcal{R}$ is $d$-self-closed.
\item For any $x,y\in Y$, one can always find $z\in Y$ such that $(z,x),(z,y)\in \mathcal{R}$.  
\end{enumerate}
 We have already check that $T$ satisfies contractivity condition. So, all the hypotheses of our theorem satisfy. Note that $x=0$ is a fixed point of $T$ and it is the unique fixed point of $T$. 
 \end{example}
\begin{remark}
\begin{enumerate}
\item 
It is notable that the binary relation $\mathcal{R}$ considered in our example is not reflexive, irreflexive and transitive. Here, $\mathcal{R}$ satisfies only symmetrical condition.
\item
 It is interesting to note that the mapping $T$ in above example neither satisfies the contractive condition of Theorem 3.1 in Alam and Imdad \cite{alm} nor the contractive condition of Theorem 2.1  in Ahmadullah et al. \cite{ahm}.

 For example, we consider $x=1$ and $y=\frac{3}{4}$. Clearly, $(x,y),(y,x)\in \mathcal{R}$. Therefore, 
\begin{eqnarray}
M_T(x,y)&=& \max \{d(x,y), d(x,Tx),d(y,Ty),\frac{d(x,Ty)+d(y,Tx)}{2}\}\nonumber\\
&=& \max\{d(1,\frac{3}{4}), d(1,\frac{3}{4}),d(\frac{3}{4},\frac{1}{4}),\frac{d(1,\frac{1}{4})+d(\frac{3}{4},\frac{3}{4})}{2}\}\nonumber\\
&=&\max \{\frac{1}{4},\frac{1}{2},\frac{3}{8}\}\nonumber\\
&=&\frac{1}{2},
\end{eqnarray}
and $d(Tx,Ty)=d(\frac{3}{4},\frac{1}{4})=\frac{1}{2}$.

Since, in Theorem (2.1) of Ahmadullah \cite{ahm}, $\phi$ is an increasing function with $\phi(t)<t,~\mbox{for} ~t>0$, so the mapping $T$ does not satisfy the contractive condition of this theorem and hence we can't exploit this theorem to obtain fixed point. Again since the Theorem 2.1 of Ahmadullah et al. \cite{ahm} is improved version over Theorem 3.1 of Alam and Imdad \cite{alm} and also Theorem 2.1 of Samet and Turinici \cite{samet1} (for symmetric binary relation), so we can't also employ these results to get fixed point of $T$ in that example. 
\end{enumerate}
\end{remark}

Analysing above two examples it is transparent that our findings unveil another direction of relation-theoretic metrical fixed point results where the main result given in Alam and Imdad \cite{alm} (Theorem 3.1) does not work (even the main result of Ahmadullah et al.\cite{ahm} (Theorem 2.1) does not work here).
\begin{remark}
If we set $p(x,y)=d(x,y),$ in Theorem \ref{thm1}, then we obtain the Theorem 3.1 of Alam and Imdad \cite{alm}. Hence our Theorem \ref{thm1} is an improved and generalized version of relation-theoretic metrical fixed point theorem due to Banach contraction given in Alam and Imdad \cite{alm}. 

\end{remark}
\section{Application}
In this section we employ our main result in nonlinear fractional differential equations. Here, we find a solution for the following nonlinear fractional differential equation (see \cite{bale}) given by:
\begin{equation*}
^CD^{\beta}x(t)=f(t,x(t))~~~~~~~(0<t<1,1<\beta\leq 2),
\end{equation*} with boundary conditions
\begin{equation*}
x(0)=0, ~~~x(1)=-\int_0^{\eta}x(s)ds~~~(0<\eta<1),
\end{equation*} where $^CD^{\beta}$ stands for the Caputo fractional derivative of order $\beta$ which is defined as
\begin{equation*}
^CD^{\beta}f(t)=\frac{1}{\Gamma(n-\beta)}\int_0^t(t-s)^{n-\beta-1}f^n(s)ds ~~~~(n-1<\beta<n;n=[\beta]+1),
\end{equation*}
and
$f:[0,1]\times \mathbb{R}\rightarrow \mathbb{R}^+$ is a continuous function. We consider $X=C([0,1],\mathbb{R})$, the set of all continuous functions from $[0,1]$ into $\mathbb{R}$ with supremum norm $||x||_\infty=\displaystyle{\sup_{t\in [0,1]}}|x(t)|.$   So, $(X,||.||_\infty)$ is a Banach space. 

The Riemann-Liouville fractional integral of order $\beta$ (for detail, see \cite{sud}) is given by
\begin{equation*}
I^{\beta}f(t)=\frac{1}{\Gamma(\beta)}\int_0^t(t-s)^{\beta-1}f(s)ds, ~~~\beta>0.
\end{equation*}
At first, we present an appropriate form of a nonlinear fractional differential equation and then investigate the existence of a solution of the given problem through fixed point theorem. So, we consider the following fractional differential equation:
\begin{equation}\label{equ1}
^CD^{\beta}x(t)=f(t,x(t))~~~~~~(0<t<1,1<\beta\leq 2),
\end{equation} with the integral boundary conditions
\begin{equation*}
x(0)=0,~~~~ x(1)=-\int_0^{k}x(s)ds~~~(0<k<1),
\end{equation*} where
\begin{enumerate}
\item $f:[0,1]\times \mathbb{R}\rightarrow \mathbb{R}^+$ is continuous function,
\item $x(t):[0,1]\rightarrow \mathbb{R}$ is continuous 
\end{enumerate}satisfying the following conditions:
$$|f(s,x)-f(s,y)\leq L|x-y|$$ for all $t\in [0,1]$ and $\forall x,y\in X$ such that $x(t)y(t)\geq  0$ and $L$ is a constant such that $L\lambda<1$ and $$\lambda=\frac{1}{\Gamma(\beta+1)}+\frac{2}{\Gamma(\beta+1)(2+k^2)}+\frac{2k^{1+\beta}}{\Gamma(\beta+1)(2+k^2)}.$$ 
Then the differential equation \ref{equ1} has unique solution.
\begin{proof}
We consider the following binary relation on $X$:
$$(x,y)\in \mathcal{R} ~\mbox{if}~ x(t)y(t)\geq 0, \forall t\in [0,1].$$ We consider $d(x,y)=\displaystyle{\sup_{t\in[0,1]}}||x(t)-y(t)||$ for all $x,y\in X.$ So, $(X,d)$ is an $\mathcal{R}$- complete metric space.

We define a mapping $T:X\rightarrow X$ by:
\begin{eqnarray*}
Tx(t)&=&\frac{1}{\Gamma(\beta)}\int_0^t(t-s)^{\beta-1}f(s,x(s))ds+\frac{2t}{(2+k^2)\times \Gamma(\beta)}\int_0^1(1-s)^{\beta-1}f(s,x(s))ds\\
&+& \frac{2t}{(2+k^2)\Gamma(\beta)}\int_0^{k}\int_0^s(s-m)^{\beta-1}f(m,x(m))dmds 
\end{eqnarray*} for $t\in [0,1]$. 

A function $x\in X$ is a solution of Equation \ref{equ1} iff $x(t)=Tx(t)$ for all $t\in [0,1]$. In order to prove  the existence of fixed point of $T$, we  show that $\mathcal{R}$ is $T$-closed and $T$ satisfies the contractive condition.

At first, we show that $\mathcal{R}$ is $T$-closed. Let, for all $t\in [0,1], (x(t),y(t))\in \mathcal{R}$. Now, we have:
\begin{eqnarray*}
 Tx(t)&=&\frac{1}{\Gamma(\beta)}\int_0^t(t-s)^{\beta-1}f(s,x(s))ds+\frac{2t}{(2+k^2)\times \Gamma(\beta)}\int_0^1(1-s)^{\beta-1}f(s,x(s))ds\\
&+& \frac{2t}{(2+k^2)\Gamma(\beta)}\int_0^k(\int_0^s(s-m)^{\beta-1}f(m,x(m))dm)ds>0
\end{eqnarray*}
which implies that $(Tx,Ty)\in \mathcal{R}$, i.e., $\mathcal{R}$ is $T$-closed. Also, it is clear that for any $x(t)\geq 0, t\in [0,1]$, we have $Tx(t)\geq 0$ for all $t\in [0,1]$, i.e., $(x(t),Tx(t))\in \mathcal{R}$ for all $t\in [0,1]$ which implies that $X(T,\mathcal{R})\neq \phi$. 

Next, we show that $T$ satisfies the contraction condition. For all $t\in [0,1]$ and $(x(t),y(t))\in \mathcal{R}$, we obtain:
\begin{eqnarray*}
|Tx-Ty|&=&\big|\frac{1}{\Gamma(\beta)}\int_0^t(t-s)^{\beta-1}f(s,x(s))ds+\frac{2t}{(2+k^2)\Gamma(\beta)}\int_0^1(1-s)^{\beta-1}f(s,x(s))ds\\
&+& \frac{2t}{(2+k^2)\Gamma(\beta)}\int_0^k(\int_0^s(s-m)^{\beta-1}f(m,x(m))dm)ds\\
&-& \frac{1}{\Gamma(\beta)}\int_0^t(t-s)^{\beta-1}f(s,y(s))ds-\frac{2t}{(2+k^2)\Gamma(\beta)}\int_0^1(1-s)^{\beta-1}f(s,y(s))ds\\
&-& \frac{2t}{(2+k^2)\Gamma(\beta)}\int_0^k(\int_0^s(s-m)^{\beta-1}f(m,y(m))dm)ds\big|\\
& \leq & \frac{1}{\Gamma(\beta)}\int_0^t(t-s)^{\beta-1}\big|f(s,x(s))-f(s,y(s))\big|ds\\
&+&\frac{2}{(2+k^2)\Gamma(\beta)}\int_0^1(1-s)^{\beta-1}\big|f(s,x(s))-f(s,y(s))\big|ds\\
&+& \frac{2}{(2+k^2)\Gamma(\beta)}\int_0^k\int_0^s(s-m)^{\beta-1}\big|f(m,x(m))-f(m,y(m))\big|dmds\\
&\leq &\frac{L||x-y||}{\Gamma(\beta)}\int_0^t(t-s)^{\beta-1}ds+\frac{2L||x-y||}{(2+k^2)\Gamma(\beta)}\int_0^1(1-s)^{\beta-1}ds\\
&+& \frac{2L||x-y||}{(2+k^2)\Gamma(\beta)}\int_0^k\int_0^s(s-m)^{\beta-1}dmds\\
&\leq &\frac{L||x-y||}{\Gamma(\beta+1)}+\frac{2L||x-y||}{(2+k^2)\Gamma(\beta+1)}+ \frac{2k^{\beta+1}L||x-y||\Gamma(\beta)}{(2+k^2)\Gamma(\beta+2)}\\
&\leq & L||x-y||(\frac{1}{\Gamma(\beta+1)}+\frac{2}{(2+k^2)\Gamma(\beta+1)}+ \frac{2k^{\beta+1}\Gamma(\beta)}{(2+k^2)\Gamma(\beta+2)})\\
\Rightarrow ||Tx-Ty||& \leq & L\lambda ||x-y||.
\end{eqnarray*}
Now, if we set $p(x,y)=d(x,y)$, then we have
$$p(Tx,Ty)\leq L\lambda p(x,y)$$ which shows that $T$ satisfies the contraction condition as $L\lambda<1$.

Next, we consider that $(x_n)$ is an $\mathcal{R}$-preserving Cauchy sequence converging to $x$. So, we must have $x_n(t)x_{n+1}(t)\geq 0$ for all $t\in[0,1]$ and $n\in \mathbb{N}$. This gives us two possibilities: either $x_n(t)\geq 0$ or $x_n(t)\leq 0$ for all $n\in \mathbb{N}$ and each $t\in [0,1]$. Let us consider the case $x_n(t)\geq 0$ for each $t\in [0,1]$ and $n\in \mathbb{N}$. Then, for every $t\in [0,1]$, $x_n(t)$ produces a sequence of non-negetive real numbers which converges to $x(t)$. Hence, we must get $x(t)\geq 0$ for each $t\in [0,1]$, i.e., $(x_n(t),x(t))\in \mathcal{R}$ for all $n$ and $t\in [0,1]$. This shows that $\mathcal{R}$ is $d$-self-closed. So, by Theorem \ref{thm1}, $x(t)$ is a fixed point of $T$  which is the required solution of Equation \ref{equ1}.

Finally, we show that $x(t)$ is the unique solution of Equation \ref{equ1}. If possible, let $y(t)$ be another solution of Equation \ref{equ1} which implies that $Ty(t)=y(t)$ for all $t\in [0,1]$. Now, we consider a constant function $z(t)=0$ for all $t\in [0,1]$. Then, it is trivial to show that $(z(t),x(t))\in \mathcal{R}$ and $(z(t),y(t))\in \mathcal{R}$ for all $t\in [0,1]$. Hence, by Theorem \ref{thm2}, we claim that $x(t)$ is the unique solution of Equation \ref{equ1}.
\end{proof}

\vskip.5cm\noindent{\bf Acknowledgements}\\
 The first named author would like to express her sincere thanks to DST-INSPIRE, New Delhi, India for their financial supports under INSPIRE fellowship scheme.

\end{document}